\theoremstyle{plain}
\newtheorem{lem}{Lemma}
\newtheorem{thm}{Theorem}
\newtheorem{cor}{Corollary}
\newtheorem{rem}{Remark}
\newtheorem{example}{Example}
\title{$B_{h}[g]$ modular sets from $B_{h}$ modular sets}
\author{Nidia Y. Caicedo$^{*}$ \hspace{3cm}  Carlos A. Gómez$^{*}$ \\  Jhonny C. G\'omez$^{*}$ \hspace{3cm} Carlos A. Trujillo$^{**}$\\{\small $^{*}$Universidad del Valle, A.A. 25360, Colombia.}\\{\small $^{**}$Universidad del Cauca, A.A. 755, Colombia}.\\{\scriptsize E-mail: nidia.caicedo@correounivalle.edu.co, carlos.a.gomez@correounivalle.edu.co,}\\{\scriptsize jhonny.gomez@correounivalle.edu.co, trujillo@unicauca.edu.co}}
\date{}
\begin{document}
\maketitle

\begin{abstract}
\noindent
A set of positive integers $A$ is called a $B_{h}[g]$ set if there are at most $g$ different sums of $h$ elements from $A$ with the same result. This \linebreak definition has a generalization to abelian groups and the main \linebreak problem related to this kind of sets, is to find $B_{h}[g]$ maximal sets i.e. those with larger cardinality.
\\

We construct $B_{h}[g]$ modular sets from $B_{h}$ modular sets using \linebreak homomorphisms and analyze the constructions of $B_{h}$ sets by Bose and Chowla, Ruzsa, and Gómez and Trujillo look at for the suitable \linebreak homomorphism that allows us to preserve the cardinal of this types of sets.
\end{abstract}

\emph{Key words and phrases}. Sidon sets, $B_{h}$ sets, $B_{h}[g]$ sets, finite fields, homomorphism of groups.

\emph{2010 Mathematics Subject Classification}: 11B50, 12E20, 20K01, 20K30.
\section{Introduction}
Let $G$ be an abelian group with additive notation, $A\subseteq G$ numerable, and $h,g\in\mathbb{Z}^{+}$, with $h\geq2$. $A$ is a $B_{h}[g]$ set
on $G$, denoted by $A\in B_{h}[g](G)$, if each $b\in G$ has at most $g$ distinct representations as
\[
b=a_{i_{1}}+\cdots+a_{i_{h}},\mbox{ }\mbox{ }\mbox{ with }a_{i_{1}},\ldots,a_{i_{h}}\in A\mbox{ }\mbox{ }\mbox{ y }\mbox{ }i_{1}\leq\ldots\leq i_{h}.
\]
When $g=1$ we say that $A\in B_{h}(G)$ and, in particular, when
$h=2$ and $G=\mathbb{Z}$ is used to talk about $B_{2}$ sets of
integers or Sidon Sets so named in honor of Simon Sidon, an Hungarian
analyst who needed to find subsets of positive integers numbers with
the property that all sums of two elements get a different result,
except commutativity. Some others researches who worked with Sidon
sets were P. Erd\"os, P. Turán, J. Singer, R. C. Bose, S. Chowla, I.
Z. Ruzsa, B. Lindstr\"om, J. Cilleruelo, C. Trujillo, B. Green, C. Vinuesa and others.

The fundamental problem related to this sets is to find
the bigger cardinal of a $B_{h}[g]$ contained in a group $G$. Pointing
in this direction, we define the function
\[
f_{h}(G,g):=\max\{|A|:A\in B_{h}[g](G)\},
\]
which has not been determined entirely yet for each $h,g\in\mathbb{Z}^{+}$
and $G$ an additive abelian group. There are two different ways to approach the goal of this problem, using counting methods in order to obtain upper bounds and the other that provides lower bounds for  $f_{h}(G,g)$ by the construction of ``good'' examples of $B_{h}[g]$ sets i.e. sets with a large cardinality.

In this paper we present a new way to construct a $B_{h}[g]$ set from a $B_{h}$
set known, that allows us to provide some lower bounds for $f_{h}(G,g)$ by applying our main result to some classic constructions. When $G=\mathbb{Z}_{N}$ we talk of modular $B_{h}[g]$ sets and customary notation $f_{h}(N,g)$.

Martin and O'Bryant \cite{KO} shows the next lower bounds for $f_2(N,g)$ using a prime
number $p$, a prime power $q$ and a positive integer $k$:
\begin{enumerate}
\item[(a)] $f_{2}(p^{2}-p,k^{2})\geq k(p-1)$,
\item[(b)] $f_{2}(q^{2}-1,k^{2})\geq kq$,
\item[(c)] $f_{2}(q^{2}+q+1,k^{2})\geq kq+1$.
\end{enumerate}
In order to establish the difference with our main result is to highlighting that this bounds are valid only for square perfect $g$ and for $h=2$,
i.e.  $B_{2}[g]$ sets in a modular group
with $g=k^{2}$ for $k\in\mathbb{Z}^{+}$. Martin and O'Bryant established
this bounds when they constructed generalized Sidon sets by joining
suitable $B_{2}$ sets type Singer \cite{SINg},
type Bose \cite{BO} and type Ruzsa \cite{RZ}.

Our approach is different than Martin and
O'Bryant because we use a particular result from Lemma \ref{LemaPrincipal},
which allows us to conclude that for any positive integer $g$:
\begin{enumerate}
\item[(a)] For any prime number $p$, such that $p\equiv1(\bmod\, g)$:
\begin{equation}\label{cota inf ruzsa}
f_{2}\left(\frac{p^{2}-p}{g},g\right)\geq p-1.
\end{equation}

\item[(b)] For any prime number $p$ and any integer $h\geq3$, such that \linebreak $p^{h}\equiv1(\bmod\, g)$:
\begin{equation}\label{cota inf bose y chowla}
f_{h}\left(\frac{p^{h}-p}{g},g\right)\geq p.
\end{equation}

\item[(c)] For any prime power $q$ and any integer $h\geq2$, such that \linebreak $q^{d}\equiv1(\bmod\, g)$
for some positive integer $d$ that divides $h$ but $d\not=h$:
\begin{equation}\label{cota inf G y T}
f_{h}\left(\frac{q^{h}-1}{g},g\right)\geq q.
\end{equation}

\end{enumerate}

Next, we show some cases where our results gives bigger lower bounds
than those obtained by Martin and O'Bryant for $f_{2}(N,g)$, although
the principal difference is that we give lower bounds for $f_{h}(N,g)$
with $h,g\in\mathbb{Z}^{+}$ and $h\geq2$:
\begin{itemize}
\item For $p=5$ and $k=2$, as $p^{2}-p=20$, according to Martin and O'Bryant
$f_{2}(20,4)\geq8$. But if we take $q=9$ and $g=4$, as $\frac{q^{2}-1}{g}=20$
then we get $f_{2}(20,4)\geq9.$
\item For $p=7$ and $k=2$, as $p^{2}-p=42$ then $f_{2}(42,4)\geq12$.
But if $q=13$ and $g=4$, as $\frac{q^{2}-1}{g}=42$, we get $f_{2}(42,4)\geq13$.
\end{itemize}
When $G=\mathbb{Z}$, it is common to denote the function
\[
F_{h}(N,g):=\max\{|A|:\mbox{ }A\subseteq[1,N],\mbox{ }A\in B_{h}[g](\mathbb{Z})\}.
\]
A lower bound of this function was prove by Lindstr\"om \cite{LN},
who established that
$$
F_{h}(N,g)\geq(1+o(1))\sqrt[h]{gN},
$$
 with $g=m^{h-1}$ for an integer $m\geq2$, when $N\rightarrow\infty$.

Now, since every $B_{h}[g]$ modular set is a $B_{h}[g]$ integer set we get
$$F_{h}(N,g)\geq f_{h}(N,g),$$
and therefore the previous bounds on $f_h(N,g)$ also holds for
$F_{h}(N,g)$. So, considering $N=\frac{p^h-1}{g}$, we have from (\ref{cota inf G y T}) that
$$F_{h}(N,g)\geq\sqrt[h]{gN+1}$$
for an infinite number of values of $N$, according to  Dirichlet Theorem about prime numbers in arithmetic progressions.

\section{The main result and the $B_{h}$ sets constructions}

The next Lemma is our main result and this allow us to establish what
happen when we apply a group homomorphism to a $B_{h}$ set. Using
this we obtained a way to construct a $B_{h}[g]$ set from a $B_{h}$
set already known.
\begin{lem}
\label{LemaPrincipal}
Let $G$ and $G^{\prime}$ be finite  additive abelian groups and $\phi:G\rightarrow G^{\prime}$
be \linebreak  a homomorphism such that $|Ker(\phi)|=g_{1}$. If $A\in B_{h}[g](G)$, then \linebreak $\phi(A)\in B_{h}[gg_{1}](\phi(G))$.
\end{lem}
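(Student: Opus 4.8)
The goal is: given $\phi: G \to G'$ with $|\ker\phi| = g_1$, and $A \in B_h[g](G)$, show $\phi(A) \in B_h[gg_1](\phi(G))$. So I need to bound the number of representations of an arbitrary element $b' \in \phi(G)$ as an ordered (non-decreasing) sum of $h$ elements of $\phi(A)$.

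Let me think about the structure. The key insight should be: a representation $b' = \phi(a_1) + \cdots + \phi(a_h)$ in $\phi(A)$ "lifts" — since $\phi$ is a homomorphism, $\phi(a_1 + \cdots + a_h) = b'$, so $a_1 + \cdots + a_h$ lies in the fiber $\phi^{-1}(b')$, which has size $g_1$ (it's a coset of $\ker\phi$, assuming $b' \in \phi(G)$). So the sums $a_1 + \cdots + a_h \in G$ take at most $g_1$ distinct values, say $b_1, \ldots, b_{g_1}$. For each such $b_j$, since $A \in B_h[g](G)$, there are at most $g$ representations of $b_j$ as a non-decreasing sum of $h$ elements of $A$. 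Hence at most $g_1 \cdot g$ representations in $G$ total. The remaining point is to relate representations of $b'$ in $\phi(A)$ to representations of the $b_j$'s in $A$.

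Here is the subtlety and likely main obstacle: the map from "representations in $A$" to "representations in $\phi(A)$" need not be injective — two distinct multisets $\{a_1,\ldots,a_h\}$ and $\{a_1',\ldots,a_h'\}$ in $A$ could have the same image multiset $\{\phi(a_1),\ldots,\phi(a_h)\}$. Wait — but that actually helps us, not hurts us: it means the count of representations downstairs is at most the count upstairs, so I need \emph{surjectivity}, i.e. every representation of $b'$ in $\phi(A)$ should come from \emph{at least one} representation of some $b_j$ in $A$. That is immediate: if $b' = \phi(a_1) + \cdots + \phi(a_h)$ with $a_i \in A$ (and we may reorder so the $\phi(a_i)$ are non-decreasing, though here I must be slightly careful about ordering conventions), then the underlying $a_1 + \cdots + a_h$ is some element of $\phi^{-1}(b')$, and reordering the $a_i$ non-decreasingly gives a valid $B_h$-representation of that element. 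So the number of $\phi(A)$-representations of $b'$ is at most $\sum_{j=1}^{g_1} r_A(b_j) \le g_1 \cdot g = gg_1$.

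The one genuinely fiddly point I would want to nail down carefully is the bookkeeping with the ordering/multiset conventions: the definition counts representations with $i_1 \le \cdots \le i_h$ (indices, i.e. essentially multisets of elements), so I should phrase the argument in terms of multisets of size $h$. I would define, for $b \in G$, the set $R_G(b)$ of size-$h$ multisets from $A$ summing to $b$, and for $b' \in \phi(G)$ the set $R_{G'}(b')$ of size-$h$ multisets from $\phi(A)$ summing to $b'$; then construct a map $R_{G'}(b') \to \bigsqcup_{b \in \phi^{-1}(b')} R_G(b)$ by choosing, for each multiset $\{\phi(a_1),\ldots,\phi(a_h)\} \in R_{G'}(b')$, some preimage multiset $\{a_1,\ldots,a_h\}$ (a selection, using that $\phi(A)$ is the image of $A$). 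This map is injective because the image multiset is recovered by applying $\phi$. Since $|\phi^{-1}(b')| = g_1$ and each $|R_G(b)| \le g$, we get $|R_{G'}(b')| \le gg_1$, which is exactly the claim. I expect no deeper obstruction than getting this selection-map argument stated cleanly.
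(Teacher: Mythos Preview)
Your proposal is correct and follows essentially the same approach as the paper. Both arguments lift representations in $\phi(A)$ to representations in $A$ via a choice of preimages, observe that the lifted sums land in the fiber $\phi^{-1}(b')$ of size $g_1$, and then invoke the $B_h[g]$ bound on $A$; the paper merely phrases this as a proof by contradiction with a pigeonhole step, whereas you count directly via an injective map, which is arguably cleaner.
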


\begin{proof}
Let us proceed by reduction to absurd.  We suppose that an element $\alpha\in\phi(G)$
has $gg_{1}+1$ different representations like an $h-$sum of elements
from $\phi(A)$; by means,
\[
\alpha=\sum_{i=1}^{h}\phi(a_{i,k}),\mbox{ }\mbox{ }\mbox{ }\mbox{ for }\mbox{ }k=1,\ldots,gg_{1}+1,
\]
where each $a_{i,k}\in A$, for $i=1,\ldots,h$, for $k\not=k^{\prime}$. Also, that
$$\{\{a_{i,k}:1\leq i\leq h\}\}\not=\{\{a_{i,k^{\prime}}:1\leq i\leq h\}\}\footnote{The notation $\{\{\}\}$ means multiset.}.$$

Now, if we denote
\[
\alpha_{k}:=\sum_{i=1}^{h}\phi(a_{i,k}),
\]
then $\alpha_{k}=\alpha$ for every $k=1,\ldots,gg_{1}+1$. Let us
fix $\alpha_{1}$, thus
\[
\alpha_{1}-\alpha_{j}=0\mbox{ }\mbox{ }\mbox{for }\mbox{ }j=2,\ldots,gg_{1}+1,
\]
but, on the other hand,
\[
\begin{array}{rcl}
\alpha_{1}-\alpha_{j} & = & \sum_{i=1}^{h}\phi(a_{i,1})-\sum_{i=1}^{h}\phi(a_{i,j})\\ \\
 & = & \phi\left(\sum_{i=1}^{h}(a_{i,1}-a_{i,k})\right).
\end{array}
\]
We put $b_{k}$  to denote
\[
\sum_{i=1}^{h}(a_{i,1}-a_{i,k}), \text{ for } k=2,\ldots,gg_{1}+1,
\]
we have $\phi(b_{k})=0$ or, what is equivalent, $b_{k}\in Ker(\phi)$
for every \linebreak $k=2,\ldots,gg_{1}+1$, and since $|Ker(\phi)|=g_{1}$,
by the pigeonhole principle, there are $b_{k_{j}}=0$ for $j=1,\ldots,g$
or $b_{k_{j}}=t\not=0$, for $j=1,\ldots,g+1$; each case implies
there is an element in $G$ that has $g+1$ different representations
as $h-$sums of elements in $A$, contradicting that $A\in B_{h}[g](G)$.
\end{proof}

\begin{cor}
\label{CorTheoBhg}
Let $G$ and $G^{\prime}$ finite additive abelian
groups and \linebreak $\phi:G\rightarrow G^{\prime}$ be a injective homomorphism.
If $A\in B_{h}[g](G)$, then \linebreak $\phi(A)\in B_{h}[g](G^{\prime})$.
\end{cor}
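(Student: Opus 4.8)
The plan is to reduce the statement immediately to Lemma~\ref{LemaPrincipal}. Since $\phi$ is injective, its kernel is trivial, so $|Ker(\phi)|=1$. Applying Lemma~\ref{LemaPrincipal} with $g_{1}=1$, we obtain at once that $\phi(A)\in B_{h}[g\cdot 1](\phi(G))=B_{h}[g](\phi(G))$. So the only content left to supply is the passage from the subgroup $\phi(G)$ to the ambient group $G^{\prime}$.

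For that second step I would argue that the $B_{h}[g]$ property of a subset is insensitive to enlarging the ambient group, provided the subset remains inside a subgroup. Concretely, fix any $b\in G^{\prime}$ and consider its representations $b=\phi(a_{i_{1}})+\cdots+\phi(a_{i_{h}})$ with the $\phi(a_{i_{j}})\in\phi(A)$ and $i_{1}\leq\cdots\leq i_{h}$. Every such sum lies in the subgroup $\phi(G)$, so if $b\notin\phi(G)$ there are no representations at all, hence at most $g$; and if $b\in\phi(G)$, the number of representations in $G^{\prime}$ equals the number of representations computed inside $\phi(G)$, which is at most $g$ because $\phi(A)\in B_{h}[g](\phi(G))$. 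In either case $b$ has at most $g$ representations, so $\phi(A)\in B_{h}[g](G^{\prime})$.

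I do not expect a genuine obstacle here: the injectivity hypothesis does all the work by forcing $g_{1}=1$ in Lemma~\ref{LemaPrincipal}, and the remaining point—that embedding $\phi(G)$ into a larger group creates no new $h$-fold sums landing outside $\phi(G)$—is a one-line observation. Keeping the two steps separate (first Lemma~\ref{LemaPrincipal}, then the subgroup-to-ambient-group remark) is preferable to folding everything into a single computation, since it makes the role of injectivity transparent.
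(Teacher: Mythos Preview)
Your proposal is correct and follows exactly the intended route: the paper states this corollary without proof, as an immediate consequence of Lemma~\ref{LemaPrincipal} once one observes that injectivity forces $|Ker(\phi)|=1$. Your additional care in passing from $\phi(G)$ to the ambient group $G^{\prime}$ is a detail the paper leaves implicit, and your argument for it is correct.
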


In order to obtain lower bounds for $f_{h}(G,g)$ we  establish conditions that allows us to
to preserve the cardinal of a set when we apply a homomorphism, more precisely we have the next lemma.

\begin{lem}
\label{Lem|Bhg|}
Let $\phi:G\rightarrow G^{\prime}$ be a finite additive abelian groups \linebreak homomorphism. Then
\[
|\phi(A)|=|A|\Longleftrightarrow(A-A)\cap Ker(\phi)=\{0\}.
\]
\end{lem}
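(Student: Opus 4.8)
The plan is to prove the biconditional by proving the contrapositive in one direction and a direct argument in the other, both hinging on the elementary fact that for a group homomorphism $\phi$, we have $\phi(x) = \phi(y)$ if and only if $x - y \in \mathrm{Ker}(\phi)$. Since $\phi$ restricted to $A$ is a surjection onto $\phi(A)$, the statement $|\phi(A)| = |A|$ is equivalent to saying that $\phi|_A$ is injective, i.e. that distinct elements of $A$ have distinct images.

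First, for the direction $(A - A) \cap \mathrm{Ker}(\phi) = \{0\} \Longrightarrow |\phi(A)| = |A|$: suppose $a, a' \in A$ with $\phi(a) = \phi(a')$. Then $\phi(a - a') = 0$, so $a - a' \in \mathrm{Ker}(\phi)$; but also $a - a' \in A - A$ by definition of the difference set, so $a - a' \in (A-A) \cap \mathrm{Ker}(\phi) = \{0\}$, forcing $a = a'$. Hence $\phi|_A$ is injective and $|\phi(A)| = |A|$.

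Conversely, for $|\phi(A)| = |A| \Longrightarrow (A-A) \cap \mathrm{Ker}(\phi) = \{0\}$, I would argue by contraposition: suppose $(A-A) \cap \mathrm{Ker}(\phi)$ contains some nonzero element $t$. Write $t = a - a'$ with $a, a' \in A$; since $t \neq 0$ we have $a \neq a'$, yet $\phi(a) - \phi(a') = \phi(t) = 0$, so $\phi(a) = \phi(a')$. Thus $\phi|_A$ is not injective and $|\phi(A)| < |A|$.

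I do not anticipate a genuine obstacle here; the only point requiring a moment's care is the observation that $0 \in (A - A) \cap \mathrm{Ker}(\phi)$ always holds (taking $a = a'$ and using $\phi(0) = 0$), so the intersection is never empty and the condition $(A-A) \cap \mathrm{Ker}(\phi) = \{0\}$ is exactly the statement that this intersection has no \emph{other} element. One should also note that finiteness of the groups is not actually needed for this lemma — it is inherited from the ambient setup — the argument is purely about injectivity of a restricted homomorphism.
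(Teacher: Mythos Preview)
Your proof is correct. The paper actually states this lemma without proof, presumably regarding it as elementary; your argument via the equivalence $\phi(a)=\phi(a')\iff a-a'\in\mathrm{Ker}(\phi)$ is exactly the standard one and fills the gap cleanly. Your side remarks (that $0$ always lies in the intersection, and that finiteness of the ambient groups is inessential for this particular lemma) are accurate.
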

The next construction, due to Derksen  \cite{DE}, use a quotient ring of \linebreak polynomials
to construct $B_{h}$ sets, that allows us via Corollary \ref{CorTheoBhg} a \linebreak different presentation
of some  classic constructions, of modular $B_h$ sets.

From now on, given any ring $R$, we denote the group of units of $R$ by $R^{*}$.

\begin{lem}
\label{LemGenBh}
Let $F$ be a field and $p(x)\in F[x]$ with $deg(p(x))=h$. If $p(s)\not=0$ for every $s\in S$, then
\[
x-S=\{x-s:\mbox{ }s\in S\},
\]
is a $B_{h}$ set at $\left({F[x]}\big\slash{\langle p(x)\rangle}\right)^{\ast}$
with $|S|$ elements.
\end{lem}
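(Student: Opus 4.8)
The plan is to show directly that the set $x-S$, viewed inside the multiplicative group $R^{\ast}$ where $R = F[x]/\langle p(x)\rangle$, has no nontrivial collision among its $h$-fold products. First I would check that $x-S$ really is a subset of $R^{\ast}$: since $\deg p = h$ and $p(s)\neq 0$ for all $s\in S$, the residue of $x-s$ is invertible in $R$ because $\gcd(x-s,\,p(x))=1$ in $F[x]$ (the only possible common factor would force $p(s)=0$). I would also note that $s\mapsto x-s$ is injective on $S$, already at the level of $F[x]$ and hence modulo $p(x)$ since $\deg(x-s) < h$ for all $s$, so distinct residues cannot coincide; thus $|x-S| = |S|$.

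Next I would set up the $B_h$ condition in multiplicative form. Suppose two multisets $\{\!\{s_1,\dots,s_h\}\!\}$ and $\{\!\{t_1,\dots,t_h\}\!\}$ of elements of $S$ give the same product in $R^{\ast}$:
\[
\prod_{i=1}^{h}(x-s_i) \equiv \prod_{i=1}^{h}(x-t_i) \pmod{p(x)}.
\]
Both sides are, before reduction, monic polynomials in $F[x]$ of degree exactly $h$. Their difference is therefore a polynomial of degree at most $h-1$ that is divisible by $p(x)$, which has degree $h$; hence the difference is the zero polynomial, i.e.
\[
\prod_{i=1}^{h}(x-s_i) = \prod_{i=1}^{h}(x-t_i) \quad\text{in } F[x].
\]
Since $F[x]$ is a unique factorization domain and each $x-s_i$, $x-t_j$ is a monic linear (hence prime) polynomial, the two factorizations must agree up to order: the multiset $\{\!\{s_1,\dots,s_h\}\!\}$ equals $\{\!\{t_1,\dots,t_h\}\!\}$. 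This is exactly the statement that $x-S \in B_h(R^{\ast})$.

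The only real subtlety — and the step I would be most careful about — is the passage from an equality modulo $p(x)$ to an equality in $F[x]$. This works precisely because both $h$-fold products are monic of degree exactly $h$, so their difference has degree strictly less than $\deg p$; it is the matching of the degree of $p$ with the number $h$ of factors that makes the argument go through, and this is where the hypothesis $\deg p = h$ is essential (not merely $\deg p \geq h$). Everything else is bookkeeping: invertibility of the generators, injectivity of $s\mapsto x-s$, and unique factorization of univariate polynomials over a field. No appeal to the earlier lemmas is needed here, though Corollary \ref{CorTheoBhg} is what will later let us push this $B_h$ set into concrete modular groups.
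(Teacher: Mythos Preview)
Your argument is correct and follows essentially the same route as the paper: verify invertibility via $\gcd(x-s,p(x))=1$, subtract the two $h$-fold products to get a polynomial of degree at most $h-1$ divisible by $p(x)$, conclude it is zero, and finish by unique factorization in $F[x]$. If anything you are slightly more careful than the paper, which writes $\deg f(x)=h-1$ where it should be $\leq h-1$, and you explicitly justify $|x-S|=|S|$; your parenthetical that $\deg p\geq h$ would not suffice is a small slip (the degree argument still forces $f=0$ in that case), but it does not affect the proof.
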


\begin{proof}
For $s\in S$, we have $(x-s)\in\left({F[x]}\big\slash{\langle p(x)\rangle}\right)^{\ast}$
if and only if \linebreak $gcd(x-s,p(x))=1$, but this is equivalent to have
$p(s)\not=0$. \\

Assume that there is an element belonging $\left({F[x]}\big\slash{\langle p(x)\rangle}\right)^{\ast}$
which has two different representations as the product to $h$ elements
from $x-S$; in other words,
\[
(x-s_{1})\cdots(x-s_{h})\equiv(x-s_{1}^{\prime})\cdots(x-s_{h}^{\prime})(\bmod\, p(x)),
\]
where $s_{i},s_{i}^{\prime}\in S$ for all $i=1,\ldots,h$. If we
use $f(x)$ to denote \linebreak $\prod_{i=1}^{h}(x-s_{i})-\prod_{i=1}^{h}(x-s_{i}^{\prime})$,
then $p(x)|f(x)$. But $deg(f(x))=h-1$, so
\[
\prod_{i=1}^{h}(x-s_{i})=\prod_{i=1}^{h}(x-s_{i}^{\prime}).
\]
However, $F[x]$ is a unique factorization domain, so $$\{s_{i}:1\leq i\leq h\}=\{s_{i}^{\prime}:1\leq i\leq2\}.$$
\end{proof}

\begin{rem}
As a particular case, if $\theta$ is an algebraic element of degree
$h$ over the field $F$, then the set
\[
\theta+F:=\{\theta+a:a\in F\}
\]
is a $B_{h}$ set in the multiplicative group \emph{$(F(\theta))^{\ast}$.
}Even more, when \linebreak $F=\mathbb{F}_{q}$, we can conclude that $BC(q,\theta):=\theta+\mathbb{F}_{q}\in B_{h}(\mathbb{F}_{q^{h}}^{\ast})$
with $q$ \linebreak elements. More generally, if we consider an algebraic element
$\beta$ with \linebreak degree $d$ over $\mathbb{F}_{q}$, then $BC(q,\beta)\in B_{d}(\mathbb{F}_{q^{h}}^{\ast})$ provided $d|h$.
\end{rem}

\begin{cor}
\label{CorBoChow}
Let $q$ be a prime power, $h\geq2$
an integer and $\theta$ a primitive element of $\mathbb{F}_{q^{h}}$.
The set
\[
B(q,h,\theta):=\log_{\theta}(\theta+\mathbb{F}_{q})(\bmod\, q^{h}-1)
\]
 is a $B_{h}$ set in $\mathbb{Z}_{q^{h}-1}$ with $q$ elements.
\end{cor}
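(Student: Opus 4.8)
The plan is to realize $B(q,h,\theta)$ as the image of the Bose--Chowla set $BC(q,\theta)=\theta+\mathbb{F}_q$ under the discrete logarithm isomorphism, and then invoke Corollary~\ref{CorTheoBhg}. Concretely, since $\theta$ is a primitive element of $\mathbb{F}_{q^h}$, the multiplicative group $\mathbb{F}_{q^h}^{\ast}$ is cyclic of order $q^h-1$ and the map $\log_\theta:\mathbb{F}_{q^h}^{\ast}\to\mathbb{Z}_{q^h-1}$ sending $\theta^k\mapsto k\pmod{q^h-1}$ is a group isomorphism (from a multiplicative group to an additive group). The Remark preceding the corollary already established that $BC(q,\theta)=\theta+\mathbb{F}_q$ is a $B_h$ set in $(\mathbb{F}_{q^h})^{\ast}$ with $q$ elements, since $\theta$ has degree $h$ over $\mathbb{F}_q$ (here $\mathbb{F}_{q^h}=\mathbb{F}_q(\theta)$ because $\theta$ is primitive).

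First I would note that an isomorphism is in particular an injective homomorphism, so Corollary~\ref{CorTheoBhg} applies with $G=(\mathbb{F}_{q^h})^{\ast}$, $G'=\mathbb{Z}_{q^h-1}$, and $\phi=\log_\theta$. Applying it to $A=BC(q,\theta)\in B_h(G)=B_h[1](G)$ yields $\log_\theta(\theta+\mathbb{F}_q)\in B_h[1](\mathbb{Z}_{q^h-1})=B_h(\mathbb{Z}_{q^h-1})$, which is precisely the assertion that $B(q,h,\theta)$ is a $B_h$ set in $\mathbb{Z}_{q^h-1}$. Second, for the cardinality: since $\phi$ is injective, $|B(q,h,\theta)|=|\log_\theta(\theta+\mathbb{F}_q)|=|\theta+\mathbb{F}_q|=|\mathbb{F}_q|=q$ (the last equality because $a\mapsto\theta+a$ is a bijection from $\mathbb{F}_q$ onto its image). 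This can also be seen through Lemma~\ref{Lem|Bhg|}, whose kernel condition is trivially satisfied for an injective $\phi$.

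The only genuine point requiring care is the assertion from the Remark that $\theta+\mathbb{F}_q$ is a $B_h$ set in $(\mathbb{F}_{q^h})^{\ast}$ rather than in $(\mathbb{F}_q(\theta))^{\ast}$ for a general field $F$; but since $\theta$ is primitive, $\mathbb{F}_q(\theta)=\mathbb{F}_{q^h}$, so there is no gap. I would also remark explicitly that the elements of $\theta+\mathbb{F}_q$ are all units of $\mathbb{F}_{q^h}$ (indeed nonzero, since $\theta\notin\mathbb{F}_q$), so applying $\log_\theta$ is legitimate. I do not anticipate a serious obstacle: the corollary does all the work, and the remaining verifications are the routine facts that $\log_\theta$ is an isomorphism and that translation by $\theta$ is injective on $\mathbb{F}_q$.
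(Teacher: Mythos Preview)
Your proposal is correct and follows essentially the same approach as the paper: apply the discrete logarithm isomorphism $\log_\theta:\mathbb{F}_{q^h}^{\ast}\to\mathbb{Z}_{q^h-1}$ to the set $\theta+\mathbb{F}_q$ (already known to be $B_h$ by the Remark) and invoke Corollary~\ref{CorTheoBhg}. Your additional checks on cardinality and on $\theta+\mathbb{F}_q\subseteq\mathbb{F}_{q^h}^{\ast}$ are sound but go slightly beyond what the paper spells out.
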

This construction can be seen as a consequence from the previous results.
Just by taking into consideration the isomorphism  $\log_{\theta}:\mathbb{F}_{q^{h}}^{\ast}\rightarrow\mathbb{Z}_{q^{h}-1}$
(called the \emph{discrete logarithm} to the base $\theta$), defined by $\log_{\theta}(x)=a$
if and only if $\theta^{a}=x$, and $\log_{\theta}(A):=\{\log_{\theta}(a):a\in A\}$.
Now, using Corollary \ref{CorTheoBhg} we have that $B(q,h,\theta)\in B_{h}\left(\mathbb{Z}_{q^{h}-1}\right)$.

The classic construction by Bose and Chowla  \cite{B&C}, gives
the set 
\[
B\&C(q,h,\theta):=\{a\in[1,q^{h}-1]:\theta^{a}-\theta\in\mathbb{F}_{q}\}
\]

that is equal to $B(q,h,\theta)$. In fact, $a\in B\&C(q,h,\theta)$
if and only if \linebreak $\theta^{a}-\theta\in\mathbb{F}_{q}$, which is equivalent
to have $\theta^{a}=\theta+x$ for some $x\in\mathbb{F}_{q}$, if
and only if $a=\log_{\theta}(\theta+x)$ which means that $a\in B(q,h,\theta)$.

Now, we want to show the way to build a  $B_{d}$ multiplicative set  in \linebreak
$\mathcal{H}={\mathbb{F}_{q^{h+1}}^{\ast}}\big\slash{\mathbb{F}_{q}^{\ast}}$
with $q+1$ elements, known as the generalization of Singer's construction
and for this, in the next theorem we consider for $d\geq2$ the set
\[
BC(q,\beta)\in B_{d+1}\left(\mathbb{F}_{q^{h+1}}^{\ast}\right),
\]
with $q$ elements, where $\beta$ is an algebraic element in
$\mathbb{F}_{q^{h+1}}$ of degree $d+1$ over $\mathbb{F}_{q}$. Also, we consider
the homomorphism
\[
\begin{array}{cccc}
\phi: & \mathbb{F}_{q^{h+1}}^{\ast} & \rightarrow & \mathcal{H}\\
 & x & \mapsto & \overline{x}:=x+\mathbb{F}_{q}^{\ast}
\end{array}.
\]

\begin{thm}
For each element $\beta\in\mathbb{F}_{q^{h+1}}$, of degree $d+1$ over $\mathbb{F}_{q}$
the set
\[
SG(q,\beta):=\{\overline{1}\}\cup\phi(BC(q,\beta))
\]
is a $B_{d}\left(\mathcal{H}\right)$ set with $q+1$ elements.
\end{thm}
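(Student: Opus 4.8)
The plan is to establish the cardinality of $SG(q,\beta)$ and its $B_d$ property separately; the second part is an argument in $\mathbb{F}_q[x]$ using minimal polynomials and unique factorization, in the spirit of the proof of Lemma~\ref{LemGenBh}. Note that invoking Lemma~\ref{LemaPrincipal} directly would only give $\phi(BC(q,\beta))\in B_{d+1}[q-1](\mathcal{H})$, which is both weaker than needed and blind to the role of the extra point $\overline{1}$, so a hands-on argument is required.

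\textbf{Cardinality.} First I would show that $\phi$ is injective on $BC(q,\beta)=\{\beta+a:a\in\mathbb{F}_q\}$: if $\phi(\beta+a)=\phi(\beta+a')$ then $\beta+a=c(\beta+a')$ for some $c\in\mathbb{F}_q^{\ast}=Ker(\phi)$, hence $(1-c)\beta=ca'-a\in\mathbb{F}_q$; since $\beta\notin\mathbb{F}_q$ (its degree over $\mathbb{F}_q$ being $d+1\ge2$) this forces $c=1$, and then $a=a'$. (This is Lemma~\ref{Lem|Bhg|} read in multiplicative notation, with $A-A$ meaning $A\cdot A^{-1}$.) Thus $|\phi(BC(q,\beta))|=q$. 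Also $\overline{1}\notin\phi(BC(q,\beta))$, for $\overline{1}=\phi(\beta+a)$ would give $\beta+a\in\mathbb{F}_q^{\ast}$, again impossible. Hence $|SG(q,\beta)|=q+1$.

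\textbf{The $B_d$ property.} Suppose some $\overline{\gamma}\in\mathcal{H}$ has two distinct representations as a product of $d$ elements of $SG(q,\beta)=\{\overline{1}\}\cup\phi(BC(q,\beta))$. Separating the factors equal to $\overline{1}$ (say $j$ of them in the first representation and $k$ in the second), these read
\[
\overline{\gamma}=\phi\!\left(\prod_{i=1}^{d-j}(\beta+a_i)\right)=\phi\!\left(\prod_{i=1}^{d-k}(\beta+b_i)\right),\qquad a_i,b_i\in\mathbb{F}_q ,
\]
so $\prod_{i=1}^{d-j}(\beta+a_i)=c\,\prod_{i=1}^{d-k}(\beta+b_i)$ for some $c\in Ker(\phi)=\mathbb{F}_q^{\ast}$. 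Setting $f(x)=\prod_{i=1}^{d-j}(x+a_i)$ and $g(x)=c\prod_{i=1}^{d-k}(x+b_i)$ in $\mathbb{F}_q[x]$ we have $f(\beta)=g(\beta)$, hence the minimal polynomial $m_\beta(x)$, of degree $d+1$, divides $f(x)-g(x)$; since $\deg(f-g)\le d<d+1$, this forces $f=g$. Comparing degrees gives $j=k$, comparing leading coefficients gives $c=1$, and then $\prod_{i=1}^{d-j}(x+a_i)=\prod_{i=1}^{d-j}(x+b_i)$, whence $\{\{a_i\}\}=\{\{b_i\}\}$ since $\mathbb{F}_q[x]$ is a unique factorization domain. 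Together with $j=k$ and the injectivity of $a\mapsto\overline{\beta+a}$ from the first part, the two multisets of factors in $SG(q,\beta)$ coincide, a contradiction; therefore $SG(q,\beta)\in B_d(\mathcal{H})$.

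\textbf{The main obstacle.} The delicate point is the bookkeeping in the second part: a priori $\overline{1}$ can occur with different multiplicities $j\ne k$ in the two representations, and one must see that a single degree count — available precisely because $\deg m_\beta=d+1$ exceeds the number $d$ of factors by exactly one — simultaneously forces $j=k$ and kills the unit $c$ coming from $Ker(\phi)$. In other words, projecting the $B_{d+1}$ set $BC(q,\beta)$ into $\mathcal{H}$ costs one ``degree of freedom'', and adjoining $\overline{1}$ is exactly what that slack permits; the remaining steps are routine.
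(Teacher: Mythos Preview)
Your proof is correct and follows essentially the same strategy as the paper: establish $|SG(q,\beta)|=q+1$ via $\beta\notin\mathbb{F}_q$, then lift a hypothetical coincidence in $\mathcal{H}$ to an equality $\prod(\beta+a_i)=c\prod(\beta+b_i)$ in $\mathbb{F}_{q^{h+1}}^{\ast}$ and kill it with the degree of the minimal polynomial of $\beta$, exactly in the spirit of Lemma~\ref{LemGenBh}. The only difference is organizational: the paper first treats the case with no $\overline{1}$ factors and then, separately, the case where one side contains some $\overline{1}$'s, whereas you handle arbitrary multiplicities $j,k$ of $\overline{1}$ on both sides in a single pass and read off $j=k$ and $c=1$ from $f=g$; this unified bookkeeping is slightly cleaner but not a different idea.
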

\begin{proof}
Let $\overline{\beta+i}=\overline{\beta+j}$, as $i,j\in\mathbb{F}_{q}$
this implies $i=j$ therefore \linebreak $|\phi(BC(q,\beta))|=q$. Besides, as
$\beta$ has degree $d+1>1$, then
\[
\overline{1}\not\in\phi(BC(q,\beta)),
\]
so $|SG(q,\beta)|=q+1$.

Now, we are going to show that $\phi(BC(q,\beta))\in B_{d}\left(\mathcal{H}\right)$.
Let us assume there are $\{\{a_{i}:1\leq i\leq d\}\}$ and $\{\{b_{j}:1\leq j\leq d\}\}$
such that
\begin{equation}\label{prod 1}
\prod_{i=1}^{d}\overline{\left(\beta+a_{i}\right)}\equiv\prod_{j=1}^{d}\overline{\left(\beta+b_{j}\right)}\mbox{ }\mbox{ }\mbox{ }\mbox{ in }\mbox{ }\mathcal{H},
\end{equation}

then, there is $t\in\mathbb{F}_{q}^{\ast}$ such that
\[
\prod_{i=1}^{d}\left(\beta+a_{i}\right)=t\prod_{i=1}^{d}\left(\beta+b_{i}\right)\mbox{ }\mbox{ }\mbox{ }\mbox{ in }\mbox{ }\mathbb{F}_{q^{h+1}}^{\ast},
\]
since the degree of $\beta$ is $d+1$ over $\mathbb{F}_{q}$, (\ref{prod 1})
is possible provided $t=1$, and $\{\{a_{i}:1\leq i\leq d\}\}=\{\{b_{j}:1\leq j\leq d\}\}$. So
\[
\{\{\overline{\beta+a_{i}}:1\leq i\leq d\}\}=\{\{\overline{\beta+b_{j}}:1\leq j\leq d\}\}.
\]
It remains to prove the case in which some terms in (\ref{prod 1}) are $\overline{1}$. In order to do this, let us assume
there is a element of $\mathcal{H}$ with two different representations
as the product of $d$ elements of $SG(q,\beta)$. Without loss of
generality, let us assume that in one of this representations there
is $k$ times the $\overline{1}$; i.e.,
\[
\prod_{i=1}^{d}\overline{\left(\beta+a_{i}\right)}\equiv\prod_{j=1}^{d-k}\overline{\left(\beta+b_{j}\right)}\mbox{ }\mbox{ }\mbox{ }\mbox{ in }\mbox{ }\mathcal{H},
\]
where $\{\{a_{i}:1\leq i\leq d\}\}$ and $\{\{b_{j}:1\leq j\leq d\}\}$ are
contained in $\mathbb{F}_{q}$. But no matter the value of $k$, this
leaves to a contradiction with the degree of $\beta$, therefore $SG(q,\beta)\in B_{d}\left(\mathcal{H}\right)$.
\end{proof}
\begin{rem}
Note that $\mathcal{H}\cong\mathbb{Z}_{N}$ with $N=\frac{q^{h+1}-1}{q-1}$. Then by Corollary \ref{CorTheoBhg}, if $\beta$ is an element of degree $h+1$ over $\mathbb{F}_{q}$, we obtained the generalized Singer's construction.
\end{rem}

Finally, we are going to use the classic constructions of $B_h$ sets type Ruzsa, Bose-Chowla, Gómez-Trujillo to built $B_h[g]$ sets, via Corollary \ref{CorTheoBhg}.

\section{Some consequences of our main result.}

The next theorem is a consequence of the main result in this paper,
where we consider a $B_{2}$ set constructed by Ruzsa called a $B_{2}$
type Ruzsa set.

\begin{thm}
\label{TheoAplReultPrinRuzsa}
For all $g\in\mathbb{Z}^{+}$, there is a $B_{2}[g]$ set in $\mathbb{Z}_{\frac{p^{2}-p}{g}}$
with $p-1$ elements, where $p$ is a prime number such that $p\equiv1(\bmod\, g)$.
\end{thm}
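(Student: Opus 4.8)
The plan is to realize the desired $B_2[g]$ set as the homomorphic image of a Ruzsa-type $B_2$ set, invoking Lemma~\ref{LemaPrincipal} for the $B_2[g]$ property and Lemma~\ref{Lem|Bhg|} to control the cardinality. First I would recall Ruzsa's construction: for a prime $p$, pick a primitive root $r$ modulo $p$, and consider the set
\[
R(p,r):=\{(a,r^{a}\bmod p)\ :\ a=1,\ldots,p-1\}\subseteq \mathbb{Z}_{p-1}\times\mathbb{Z}_{p},
\]
which is a $B_2$ set in the group $G=\mathbb{Z}_{p-1}\times\mathbb{Z}_{p}$ with exactly $p-1$ elements. (This is the set obtained from the additive--multiplicative incidence that Ruzsa used; one checks the $B_2$ property from the fact that knowing $a+a'$ in $\mathbb{Z}_{p-1}$ and $r^{a}r^{a'}=r^{a+a'}$ in $\mathbb{Z}_p$ pins down the unordered pair $\{a,a'\}$.)

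Since $p\equiv 1\pmod g$, write $p-1=gm$. Then $\mathbb{Z}_{p-1}$ has a unique subgroup $H$ of order $g$, namely $H=m\mathbb{Z}_{p-1}$, and the quotient map gives a homomorphism
\[
\phi:\ \mathbb{Z}_{p-1}\times\mathbb{Z}_{p}\ \longrightarrow\ (\mathbb{Z}_{p-1}/H)\times\mathbb{Z}_{p}\ \cong\ \mathbb{Z}_{m}\times\mathbb{Z}_{p},
\]
with $|\operatorname{Ker}\phi|=g$. The target group is cyclic of order $mp=\tfrac{(p-1)p}{g}=\tfrac{p^2-p}{g}$, since $\gcd(m,p)=1$, so $\phi(G)\cong\mathbb{Z}_{(p^2-p)/g}$. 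By Lemma~\ref{LemaPrincipal} applied with $A=R(p,r)$ (which is $B_2[1]$) and this $\phi$ (with $g_1=g$), we get $\phi(R(p,r))\in B_2[g](\phi(G))$, i.e.\ a $B_2[g]$ set in $\mathbb{Z}_{(p^2-p)/g}$.

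The remaining point, and the one that needs the most care, is that $\phi$ does not collapse $R(p,r)$, i.e.\ that $|\phi(R(p,r))|=p-1$; by Lemma~\ref{Lem|Bhg|} this is equivalent to $(R(p,r)-R(p,r))\cap\operatorname{Ker}\phi=\{0\}$. A nonzero difference of two elements of $R(p,r)$ has the form $(a-a',\ r^{a}-r^{a'})$ with $a\neq a'$; for this to lie in $\operatorname{Ker}\phi=H\times\{0\}$ we would need $r^{a}\equiv r^{a'}\pmod p$, which forces $a\equiv a'\pmod{p-1}$, hence $a=a'$ in the range $1,\ldots,p-1$ — a contradiction. So no nonzero difference lies in the kernel, the cardinality is preserved, and $\phi(R(p,r))$ is the asserted $B_2[g]$ set with $p-1$ elements in $\mathbb{Z}_{(p^2-p)/g}$. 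I expect the only subtlety to be choosing $\phi$ to act only on the $\mathbb{Z}_{p-1}$ coordinate (acting on the $\mathbb{Z}_p$ coordinate instead would kill the injectivity argument), together with the elementary check that $\mathbb{Z}_m\times\mathbb{Z}_p$ is cyclic of the right order.
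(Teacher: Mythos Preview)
Your proposal is correct and follows essentially the same route as the paper: start from Ruzsa's $B_2$ set $R(p,r)\subseteq\mathbb{Z}_{p-1}\times\mathbb{Z}_p$, quotient the first coordinate by the order-$g$ subgroup $H=\frac{p-1}{g}\mathbb{Z}_{p-1}$, apply Lemma~\ref{LemaPrincipal} for the $B_2[g]$ property, use the Chinese Remainder Theorem to identify the target with $\mathbb{Z}_{(p^2-p)/g}$, and invoke Lemma~\ref{Lem|Bhg|} with the observation that $r^a\equiv r^{a'}\pmod p$ forces $a=a'$ to keep the cardinality at $p-1$. Your injectivity argument is in fact slightly cleaner than the paper's (which writes ``$p\mid a_1-a_2$'' where ``$(p-1)\mid a_1-a_2$'' is meant), and your closing remark about why one must quotient the $\mathbb{Z}_{p-1}$ factor rather than the $\mathbb{Z}_p$ factor is a nice addition.
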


\begin{proof}
Let us take $\theta$ a primitive element of $\mathbb{Z}_{p}$, and the $B_2$ set
\[
R(p,\theta):=\{(a,\mbox{ }\theta^{a}):a\in\{1,2\ldots,p-1\}\}\subseteq \mathbb{Z}_{p-1}\times\mathbb{Z}_{p}
\]
with $p-1$ elements. Let us consider $H_{g}:=\langle\frac{p-1}{g}\rangle$ as
a subgroup of $\mathbb{Z}_{p-1}\cong\mathbb{Z}_{p}^{\ast}$ and the homomorphism
\[
\begin{array}{cccc}
\phi: & \mathbb{Z}_{p-1}\times\mathbb{Z}_{p} & \rightarrow & \left({\mathbb{Z}_{p-1}}\big\slash{H_{g}}\right)\times\mathbb{Z}_{p}\\
 & (x,y) & \mapsto & (x+H_{g},y)
\end{array}
\]
then $|Ker(\phi)|=g$ and, by Lemma \ref{LemaPrincipal},
\[
\phi(R(p,\theta))\in B_{2}[g]\left(\left({\mathbb{Z}_{p-1}}\big\slash{H_{g}}\right)\times\mathbb{Z}_{p}\right).
\]
But the canonical isomorphism establish that
\[
\left({\mathbb{Z}_{p-1}}\big\slash{H_{g}}\right)\times\mathbb{Z}_{p}\cong\mathbb{Z}_{\frac{p-1}{g}}\times\mathbb{Z}_{p},
\]
and, as $gcd\left(\frac{p-1}{g},p\right)=1$, by the Chinese Remainder
Theorem, \linebreak $\mathbb{Z}_{\frac{p-1}{g}}\times\mathbb{Z}_{p}\cong\mathbb{Z}_{\frac{p^2-p}{g}}$.

Using Lemma \ref{Lem|Bhg|}, we are going to show that $|\phi(R(p,\theta))|=p-1$.
Let us denote
\[
\alpha_{i}:=(a_{i},\theta^{a_{i}})\in R(p,\theta),\mbox{ }\mbox{ }\mbox{ }i=1,2,
\]
with $\alpha_{2}-\alpha_{1}\in Ker(\phi)$, then $\phi(\alpha_{2})=\phi(\alpha_{1})$,
i.e.,
\[
(a_{1}+H_{g},\mbox{ }\theta^{a_{1}})=(a_{2}+H_{g},\mbox{ }\theta^{a_{2}}).
\]
So, $a_1-a_2\in H_{g}$ implying that $\frac{p-1}{g}\big|(a_1-a_2)$; besides, $\theta^{a_1}\equiv\theta^{a_2}(\bmod\, p)$, then $p|a_1-a_2$ and $a_1-a_2\leq p-1$,
which implies $a_{1}=a_{2}$ and therefore $$(R(p,\theta)-R(p,\theta))\cap Ker(\phi)=\{0\}.$$

$Ruzsa(p,\theta)\in B_{2}\left(\mathbb{Z}_{p^{2}-p}\right)$ is the
image of $R(p,\theta)$ via the Chinese Remainder Theorem, which is
the classic construction of Ruzsa, \cite{RZ}. \\
\end{proof}
Let $\varphi$ be an homomorphism between $\mathbb{Z}_{p^{2}-p}$
and $\mathbb{Z}_{\frac{p^{2}-p}{g}}$ such that \linebreak $|ker(\varphi)|=g$,
then by Lemma \ref{LemaPrincipal}
\[
Ruzsa(p,\theta,g)=\varphi(Ruzsa(p,\theta))\in B_{2}[g]\left(\mathbb{Z}_{\frac{p^{2}-p}{g}}\right).
\]

\begin{cor}
\label{CoroResultPrinpRuzsa} \emph{For all $g\in\mathbb{Z}^{+}$,
if $p$ is a prime number such that \linebreak $p\equiv1(\bmod\, g)$, then
\[
f_{2}\left(\frac{p^{2}-p}{g},g\right)\geq p-1.
\]
}
\end{cor}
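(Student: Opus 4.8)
The plan is to derive Corollary~\ref{CoroResultPrinpRuzsa} directly from Theorem~\ref{TheoAplReultPrinRuzsa} together with the definition of $f_2$. Recall that $f_2(N,g) = \max\{|A| : A \in B_2[g](\mathbb{Z}_N)\}$, so to establish a lower bound it suffices to exhibit a single set of the required size.

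First, fix $g \in \mathbb{Z}^{+}$ and a prime $p$ with $p \equiv 1 \pmod g$. By Theorem~\ref{TheoAplReultPrinRuzsa}, there exists a set $A \subseteq \mathbb{Z}_{(p^2-p)/g}$ with $A \in B_2[g]\!\left(\mathbb{Z}_{(p^2-p)/g}\right)$ and $|A| = p-1$. Concretely, one may take $A = Ruzsa(p,\theta,g) = \varphi(Ruzsa(p,\theta))$ as in the discussion following the proof of the theorem, where $\theta$ is a primitive root mod $p$ and $\varphi$ is the homomorphism onto $\mathbb{Z}_{(p^2-p)/g}$ with kernel of size $g$; the cardinality $p-1$ was verified there via Lemma~\ref{Lem|Bhg|}.

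Second, since $N := (p^2-p)/g$ is a well-defined positive integer (here $g \mid p-1$, so in particular $g \mid p^2 - p$), the existence of such an $A$ means $N$ is a legitimate modulus and $A$ is an admissible competitor in the maximum defining $f_2(N,g)$. Hence $f_2(N,g) \geq |A| = p-1$, which is exactly the claimed inequality.

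There is essentially no obstacle here: the corollary is a direct translation of the theorem into the language of the extremal function $f_2$. The only point worth noting is the mild hypothesis bookkeeping — one should observe that $p \equiv 1 \pmod g$ guarantees $(p^2-p)/g \in \mathbb{Z}^{+}$ so that the statement $f_2\!\left(\frac{p^2-p}{g},g\right)$ makes sense — but this is already implicit in the construction of $Ruzsa(p,\theta,g)$. All the genuine content (the $B_2[g]$ property and the preservation of cardinality under $\varphi$) is carried by Lemma~\ref{LemaPrincipal}, Lemma~\ref{Lem|Bhg|}, and Theorem~\ref{TheoAplReultPrinRuzsa}.
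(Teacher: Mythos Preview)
Your proposal is correct and is exactly the intended argument: the paper states the corollary without proof because it is an immediate consequence of Theorem~\ref{TheoAplReultPrinRuzsa} and the definition of $f_{2}(N,g)$, which is precisely what you spell out.
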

Now, we use the construction of a $B_{h}$ set type Gómez and Trujillo
set to apply our main result.

\begin{thm}
\emph{\label{TheoAplResulPrinGT} For all $g\in\mathbb{Z}^{+}$ there
is a $B_{h}[g]$ set at $\mathbb{Z}_{\frac{p^{h}-p}{g}}$ with $p$
elements, where $p$ is a prime number and $h\geq3$ is an integer such
that $p^{h}\equiv1(\bmod\, g)$.}
\end{thm}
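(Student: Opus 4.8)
The plan is to mirror the proof of Theorem~\ref{TheoAplReultPrinRuzsa}, with the $B_{h}$ set of G\'omez and Trujillo replacing the Ruzsa set. Fix a primitive element $\theta$ of $\mathbb{F}_{p^{h-1}}$; since $h\geq 3$, the element $\theta$ has degree $h-1\geq 2$ over $\mathbb{F}_{p}$, so $\theta\notin\mathbb{F}_{p}$ and $\theta+a\neq 0$ for all $a\in\mathbb{F}_{p}$. Put
\[
GT(p,h,\theta):=\{(a,\ \log_{\theta}(\theta+a)):a\in\mathbb{F}_{p}\}\subseteq\mathbb{Z}_{p}\times\mathbb{Z}_{p^{h-1}-1},
\]
a set of $p$ pairwise distinct elements (their first coordinates already distinguish them). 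To see that $GT(p,h,\theta)\in B_{h}(\mathbb{Z}_{p}\times\mathbb{Z}_{p^{h-1}-1})$ I would run the argument behind Lemma~\ref{LemGenBh}: an equality $\sum_{i=1}^{h}(a_{i},\log_{\theta}(\theta+a_{i}))=\sum_{i=1}^{h}(b_{i},\log_{\theta}(\theta+b_{i}))$ gives, in the second coordinate, $\prod_{i=1}^{h}(\theta+a_{i})=\prod_{i=1}^{h}(\theta+b_{i})$ in $\mathbb{F}_{p^{h-1}}$, so $\theta$ is a root of $P(x):=\prod_{i=1}^{h}(x+a_{i})-\prod_{i=1}^{h}(x+b_{i})\in\mathbb{F}_{p}[x]$, a polynomial of degree at most $h-1$. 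The minimal polynomial of $\theta$ being monic of degree exactly $h-1$, either $P=0$, and unique factorization in $\mathbb{F}_{p}[x]$ forces $\{\{a_{1},\dots,a_{h}\}\}=\{\{b_{1},\dots,b_{h}\}\}$, or $P$ is a nonzero scalar multiple of that minimal polynomial, and hence its coefficient of $x^{h-1}$ is nonzero; but that coefficient equals $\sum a_{i}-\sum b_{i}$, which vanishes in $\mathbb{F}_{p}$ because the first coordinates of the two sums agree.

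Next I would transport $GT(p,h,\theta)$ to the target group. The hypotheses give $g\mid p^{h-1}-1$: the statement presupposes $g\mid p^{h}-p=p(p^{h-1}-1)$, and $p^{h}\equiv 1\ (\bmod\, g)$ forces $\gcd(g,p)=1$, whence $g\mid p^{h-1}-1$; so $H_{g}:=\langle (p^{h-1}-1)/g\rangle$ is a subgroup of $\mathbb{Z}_{p^{h-1}-1}$ of order $g$. Consider the homomorphism
\[
\begin{array}{cccc}
\phi: & \mathbb{Z}_{p}\times\mathbb{Z}_{p^{h-1}-1} & \rightarrow & \mathbb{Z}_{p}\times(\mathbb{Z}_{p^{h-1}-1}/H_{g})\\
 & (x,y) & \mapsto & (x,\ y+H_{g})
\end{array}.
\]
Then $|Ker(\phi)|=g$, so Lemma~\ref{LemaPrincipal} yields $\phi(GT(p,h,\theta))\in B_{h}[g]\,(\mathbb{Z}_{p}\times(\mathbb{Z}_{p^{h-1}-1}/H_{g}))$. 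The canonical isomorphism gives $\mathbb{Z}_{p}\times(\mathbb{Z}_{p^{h-1}-1}/H_{g})\cong\mathbb{Z}_{p}\times\mathbb{Z}_{(p^{h-1}-1)/g}$, and since $\gcd(p,(p^{h-1}-1)/g)=1$ the Chinese Remainder Theorem identifies this with $\mathbb{Z}_{p(p^{h-1}-1)/g}=\mathbb{Z}_{(p^{h}-p)/g}$.

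Finally I would use Lemma~\ref{Lem|Bhg|} to check that $\phi$ does not shrink the set, i.e.\ that $(GT(p,h,\theta)-GT(p,h,\theta))\cap Ker(\phi)=\{0\}$; here the verification is shorter than in Theorem~\ref{TheoAplReultPrinRuzsa}, since a difference of two elements of $GT(p,h,\theta)$ has the form $(a-a',\ \log_{\theta}(\theta+a)-\log_{\theta}(\theta+a'))$, and membership in $Ker(\phi)$ already forces the first coordinate $a-a'$ to vanish in $\mathbb{Z}_{p}$, hence $a=a'$ and the difference is $0$. Therefore $|\phi(GT(p,h,\theta))|=p$, so $\phi(GT(p,h,\theta))$ is the required $B_{h}[g]$ set in $\mathbb{Z}_{(p^{h}-p)/g}$ with $p$ elements (and, transported back through the Chinese Remainder isomorphism, it is the classical G\'omez--Trujillo set inside $\mathbb{Z}_{p^{h}-p}$).

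The one genuinely delicate point I anticipate is establishing that $GT(p,h,\theta)$ is a $B_{h}$ set — concretely, ruling out the case in which $P(x)$ is a nonzero multiple of the minimal polynomial of $\theta$, where the information carried by the $\mathbb{Z}_{p}$-coordinate is indispensable and where the hypothesis $h\geq 3$ is used. The kernel computation, the two isomorphisms, and the cardinality check are all routine.
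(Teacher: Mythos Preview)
Your proof is correct and follows essentially the same approach as the paper: the same homomorphism $\phi$, the same invocation of Lemma~\ref{LemaPrincipal}, the same Chinese Remainder identification, and the same first-coordinate argument via Lemma~\ref{Lem|Bhg|} to preserve cardinality. You go further than the paper in two places---you supply a self-contained proof that $GT(p,h,\theta)\in B_{h}(\mathbb{Z}_{p}\times\mathbb{Z}_{p^{h-1}-1})$ (the paper simply cites \cite{GT-1}) and you explicitly justify $g\mid p^{h-1}-1$ (the paper uses this without comment)---but the overall architecture is identical.
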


\begin{proof}
At \cite{GT-1} shows that, for $\theta$ a primitive element of $\mathbb{F}_{p^{h-1}}$,
\[
GT(p,h,\theta)=\{(a,\log_{\theta}(\theta+a):a\in\mathbb{Z}_{p}\}
\]
is a $B_{h}(\mathbb{Z}_{p}\times\mathbb{Z}_{p^{h-1}-1})$ set
with $p$ elements. Consider $H_{g}:=\langle\frac{p^{h-1}-1}{g}\rangle$
subgroup of $\mathbb{Z}_{p^{h-1}-1}$ and the homomorphism
\[
\begin{array}{cccc}
\phi: & \mathbb{Z}_{p}\times\mathbb{Z}_{p^{h-1}-1} & \rightarrow & \mathbb{Z}_{p}\times\left({\mathbb{Z}_{p^{h-1}-1}}\big\slash{H_{g}}\right)\\
 & (x,y) & \mapsto & (x,y+H_{g})
\end{array}
\]
then $|Ker(\phi)|=g$ and by Lemma \ref{LemaPrincipal}, we have $\phi(GT(p,h,\theta))\in B_{h}[g]$
at $\mathbb{Z}_{p}\times\left({\mathbb{Z}_{p^{h-1}-1}}\big\slash{H_{g}}\right)$,
but, using the canonical isomorphism
\[
\mathbb{Z}_{p}\times\left({\mathbb{Z}_{p^{h-1}-1}}\big\slash{H_{g}}\right)\cong\mathbb{Z}_{p}\times\mathbb{Z}_{\frac{p^{h-1}-1}{g}},
\]
and, the Chinese Remainder Theorem states that $\mathbb{Z}_{p}\times\mathbb{Z}_{\frac{p^{h-1}-1}{g}}\cong\mathbb{Z}_{\frac{p^{h}-p}{g}}$.

Now, using Lemma \ref{Lem|Bhg|}, let
\[
\alpha_{i}:=(a_{i},\log_{\theta}(\theta+a_{i}))\in GT(p,h,\theta),\mbox{ }\mbox{ }i=1,2,
\]
if $\alpha_{2}-\alpha_{1}\in Ker(\phi)$, then $\phi(\alpha_{2})=\phi(\alpha_{1})$,
i.e.,
\[
(a_{1},\mbox{ }\log_{\theta}(\theta+a_{1})+H_{g})=(a_{2},\mbox{ }\log_{\theta}(\theta+a_{2})+H_{g}),
\]
but this implies $a_{1}=a_{2}$, therefore $|\phi(GT(p,h,\theta))|=p$\emph{. }
\end{proof}
Now, $G\&T(p,h,\theta)\in B_{h}\left(\mathbb{Z}_{p^{h}-p}\right)$
is the image of $GT(p,h,\theta)$ via the \linebreak Chinese Remainder Theorem.
If $\varphi$ is the appropriate homomorphism between $\mathbb{Z}_{p^{h}-p}$
and $\mathbb{Z}_{\frac{p^{h}-p}{g}}$ with $|ker(\varphi)|=g$ then
by Lemma \ref{LemaPrincipal} we conclude that
\[
G\&T(p,h,\theta,g)=\varphi(G\&T(p,h,\theta))\in B_{h}[g]\left(\mathbb{Z}_{\frac{p^{h}-p}{g}}\right).
\]

\begin{cor}
\emph{\label{CoroResulPrinGT}For all $g\in\mathbb{Z}^{+}$, if $p$
is a prime number and $h\geq3$ is an integer such that $p^{h}\equiv1(\bmod\, g)$,
then
\[
f_{h}\left(\frac{p^{h}-p}{g},g\right)\geq p.
\]
}
\end{cor}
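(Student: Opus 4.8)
The plan is to read the bound off directly from Theorem \ref{TheoAplResulPrinGT}. By definition $f_h(N,g)=\max\{|A|:A\in B_h[g](\mathbb{Z}_N)\}$, so to prove $f_h\!\left(\frac{p^h-p}{g},g\right)\geq p$ it is enough to exhibit a single set $A\in B_h[g]\!\left(\mathbb{Z}_{\frac{p^h-p}{g}}\right)$ with $|A|=p$. Under the hypotheses of the corollary ($p$ prime, $h\geq3$, and the stated congruence on $g$), Theorem \ref{TheoAplResulPrinGT} supplies exactly such a set: the image $\phi(GT(p,h,\theta))$ of the G\'omez--Trujillo set, transported to $\mathbb{Z}_{\frac{p^h-p}{g}}$ through the canonical quotient isomorphism and the Chinese Remainder Theorem. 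The inequality then follows at once.

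For completeness I would indicate where each hypothesis enters, since the corollary only repackages the theorem. The primitivity of $\theta$ in $\mathbb{F}_{p^{h-1}}$ is what makes $GT(p,h,\theta)$ a $B_h$ set in $\mathbb{Z}_p\times\mathbb{Z}_{p^{h-1}-1}$ with $p$ elements, as established in \cite{GT-1}. The divisibility hypothesis on $g$ is what allows $H_g=\langle\frac{p^{h-1}-1}{g}\rangle$ to be a subgroup of order $g$, so that the reduction homomorphism $\phi$ has $|Ker(\phi)|=g$ and Lemma \ref{LemaPrincipal} yields $\phi(GT(p,h,\theta))\in B_h[g]$. Finally, $\gcd\!\left(p,\frac{p^{h-1}-1}{g}\right)=1$ is what lets the Chinese Remainder Theorem collapse $\mathbb{Z}_p\times\mathbb{Z}_{\frac{p^{h-1}-1}{g}}$ onto the cyclic group $\mathbb{Z}_{\frac{p^h-p}{g}}$.

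The one point that carries content is that no elements are lost in passing from $GT(p,h,\theta)$ to $\phi(GT(p,h,\theta))$, i.e.\ that $|\phi(GT(p,h,\theta))|=p$. By Lemma \ref{Lem|Bhg|} this is equivalent to $(GT(p,h,\theta)-GT(p,h,\theta))\cap Ker(\phi)=\{0\}$; since $Ker(\phi)=\{0\}\times H_g$ and any two points of $GT(p,h,\theta)$ have first coordinates $a_1,a_2\in\mathbb{Z}_p$, a difference lying in $Ker(\phi)$ forces $a_1=a_2$, hence the two points agree. As this was already carried out inside the proof of Theorem \ref{TheoAplResulPrinGT}, there is no genuine obstacle here: the corollary is simply the bookkeeping statement that extracts the numerical lower bound from the construction, exactly as Corollary \ref{CoroResultPrinpRuzsa} does in the Ruzsa case.
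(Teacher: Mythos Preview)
Your proposal is correct and matches the paper's approach exactly: the corollary has no separate proof in the paper and is simply the immediate numerical consequence of Theorem~\ref{TheoAplResulPrinGT} via the definition of $f_h(N,g)$. Your additional recap of where each hypothesis enters in the theorem's proof (the role of $H_g$, Lemma~\ref{LemaPrincipal}, Lemma~\ref{Lem|Bhg|}, and the Chinese Remainder Theorem) is accurate and faithful to the paper's argument.
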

At last but not least important, we use a $B_{h}$ set type Bose and Chowla
to obtain a $B_{h}[g]$ set in \emph{$\mathbb{Z}_{\frac{q^{h}-1}{g}}$. }

\begin{thm}
\emph{\label{TheoAplResultPrinB&C} For all $g\in\mathbb{Z}^{+}$,
there is a $B_{h}[g]$ set in $\mathbb{Z}_{\frac{q^{h}-1}{g}}$
with $q$ elements, where $q$ is a prime power and $h\geq2$ is an integer,
such that $q^{k}\equiv1(\bmod\, g)$ for some $k$ that divides $h$,
but $k\not=h$. }\end{thm}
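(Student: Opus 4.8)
The plan is to run the same scheme as in Theorems~\ref{TheoAplReultPrinRuzsa} and~\ref{TheoAplResulPrinGT}, now feeding the Bose--Chowla set into Lemma~\ref{LemaPrincipal}. First I would fix a primitive element $\theta$ of $\mathbb{F}_{q^{h}}$ and take the modular Bose--Chowla set $B(q,h,\theta)\in B_{h}(\mathbb{Z}_{q^{h}-1})$ with $q$ elements provided by Corollary~\ref{CorBoChow}. Since $k\mid h$ we have $q^{k}-1\mid q^{h}-1$, and from $g\mid q^{k}-1$ it follows that $g\mid q^{h}-1$; hence $H_{g}:=\langle\frac{q^{h}-1}{g}\rangle$ is a subgroup of $\mathbb{Z}_{q^{h}-1}$ of order $g$. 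I would then consider the canonical projection $\phi:\mathbb{Z}_{q^{h}-1}\to\mathbb{Z}_{q^{h}-1}/H_{g}$, which has $|Ker(\phi)|=g$, so Lemma~\ref{LemaPrincipal} yields $\phi(B(q,h,\theta))\in B_{h}[g](\mathbb{Z}_{q^{h}-1}/H_{g})$, and the canonical isomorphism identifies $\mathbb{Z}_{q^{h}-1}/H_{g}$ with $\mathbb{Z}_{(q^{h}-1)/g}$.

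The core step is to verify, via Lemma~\ref{Lem|Bhg|}, that $\phi$ preserves cardinality, i.e.\ $(B(q,h,\theta)-B(q,h,\theta))\cap H_{g}=\{0\}$. Suppose $a_{1},a_{2}\in B(q,h,\theta)$ with $a_{1}-a_{2}\in H_{g}$. Using the $B\&C(q,h,\theta)$ description, $\theta^{a_{i}}=\theta+x_{i}$ with $x_{i}\in\mathbb{F}_{q}$, so $c:=\theta^{a_{1}-a_{2}}=\frac{\theta+x_{1}}{\theta+x_{2}}$, and $c^{g}=1$ because $a_{1}-a_{2}$ is a multiple of $\frac{q^{h}-1}{g}$. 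Here I would invoke the hypothesis $g\mid q^{k}-1$ with $k\mid h$, $k<h$: since $\mathbb{F}_{q^{h}}^{\ast}$ is cyclic, its unique subgroup of order $q^{k}-1$ is $\mathbb{F}_{q^{k}}^{\ast}$, and the order-$g$ subgroup lies inside it, whence $c\in\mathbb{F}_{q^{k}}$. From $\theta+x_{1}=c(\theta+x_{2})$ we get $(1-c)\theta=cx_{2}-x_{1}$; if $c\neq1$ this would put $\theta\in\mathbb{F}_{q^{k}}$, impossible since $\theta$ generates $\mathbb{F}_{q^{h}}$ over $\mathbb{F}_{q}$ and $\mathbb{F}_{q^{k}}\subsetneq\mathbb{F}_{q^{h}}$. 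Hence $c=1$, so $x_{1}=x_{2}$, $\theta^{a_{1}}=\theta^{a_{2}}$, and $a_{1}=a_{2}$ in $\mathbb{Z}_{q^{h}-1}$. Thus $|\phi(B(q,h,\theta))|=q$, which finishes the argument; as in the previous cases, transporting to $\mathbb{Z}_{q^{h}-1}$ recovers the classical Bose--Chowla construction, and composing with a suitable homomorphism $\varphi:\mathbb{Z}_{q^{h}-1}\to\mathbb{Z}_{(q^{h}-1)/g}$ with $|ker(\varphi)|=g$ together with Lemma~\ref{LemaPrincipal} produces the advertised $B_{h}[g]$ set, and hence the bound $f_{h}\!\left(\frac{q^{h}-1}{g},g\right)\geq q$.

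The hard part is precisely the subfield step: one must recognize that the hypothesis ``$q^{k}\equiv1\pmod g$ for some $k\mid h$, $k\neq h$'' is exactly what forces the order-$g$ subgroup of $\mathbb{F}_{q^{h}}^{\ast}$ to live inside a proper subfield $\mathbb{F}_{q^{k}}$, which is what makes the difference set of $B(q,h,\theta)$ disjoint from $Ker(\phi)\setminus\{0\}$. Everything else (existence of $H_{g}$, the quotient and Chinese Remainder isomorphisms, $\theta$ having degree $h$ over $\mathbb{F}_{q}$) is routine; but without a proper divisor $k$ of $h$ with $g\mid q^{k}-1$ the order-$g$ subgroup need not sit in any proper subfield, and then the cardinality of the image could drop below $q$.
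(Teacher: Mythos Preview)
Your proof is correct and follows essentially the same approach as the paper: the paper works directly in the multiplicative group $\mathbb{F}_{q^{h}}^{\ast}$ with the natural projection onto $\mathbb{F}_{q^{h}}^{\ast}/H_{g}$ and applies the discrete logarithm at the end, whereas you first pass to $\mathbb{Z}_{q^{h}-1}$ via $\log_{\theta}$ and then project, but the key cardinality argument---$H_{g}\subseteq\mathbb{F}_{q^{k}}^{\ast}$ forcing $c\in\mathbb{F}_{q^{k}}$ and hence $c=1$ by a degree contradiction---is identical in both.
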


\begin{proof}
Consider the set $\theta+\mathbb{F}_{q}$, where $\theta$ is such
that $\left\langle \theta\right\rangle =\mathbb{F}_{q^{h}}^{\ast}$.
If we denote by $H_{g}$ the subgroup of $\mathbb{F}_{q^{k}}^{\ast}=\langle\theta^{\frac{q^{h}-1}{q^{k}-1}}\rangle$
generated by $\theta^{\frac{q^{h}-1}{g}}$, then the natural homomorphism
\[
\begin{array}{cccc}
\phi: & \mathbb{F}_{q^{h}}^{\ast} & \rightarrow & {\mathbb{F}_{q^{h}}^{\ast}}\big\slash{H_{g}}\\
 & x & \mapsto & x+H_{g}
\end{array}
\]
have $Ker(\phi)=H_{g}$ and $|Ker(\phi)|=g$. From Lemma \ref{LemaPrincipal}, we have that \linebreak
$\phi(\theta+\mathbb{F}_{q})\in B_{h}[g]\left({\mathbb{F}_{q^{h}}^{\ast}}\big\slash{H_{g}}\right)$.

Let $a,b\in\mathbb{F}_{q}$, if $\phi(\theta+a)=\phi(\theta+b)$,
then
\[
\theta+a\equiv\theta+b(\bmod\, H_{g}),
\]
this implies $\theta+a\equiv\theta+b\,(\bmod\,\mathbb{F}_{q^{k}}^{\ast}),$
therefore $\theta+a=\alpha(\theta+b)$ for $\alpha\in\mathbb{F}_{q^{k}}^{\ast}$.
Now, if $\alpha=1$ then $a=b$, on contrary
\[
\theta=\frac{b\alpha-a}{\alpha-1}\in\mathbb{F}_{q^{k}}^{\ast},
\]
that contradicts $grad_{\mathbb{F}_{q}}(\theta)=h>k$, therefore $|\phi(\theta+\mathbb{F}_{q})|=q$

Finally, using the isomorphism known as the discrete logarithm to
the base $\theta$,
\[
{\mathbb{F}_{q^{h}}^{\ast}}\big\slash{H_{g}}\cong\mathbb{Z}_{\frac{q^{h}-1}{g}},
\]
and Corollary \ref{CorTheoBhg} guarantees the existence of a $B_{h}[g]$ set in $\mathbb{Z}_{\frac{q^{h}-1}{g}}$ with $q$ elements.\end{proof}

\begin{cor}
\emph{\label{CoroResulPrinB&C}For all $g\in\mathbb{Z}^{+}$, if $q$
is a prime power and $h\geq2$ is an integer, such that $q^{d}\equiv1(\bmod\, g)$,
for some $d$ that divides $h$ but $d\not=h$,
\[
f_{h}\left(\frac{q^{h}-1}{g},g\right)\geq q.
\]
}
\end{cor}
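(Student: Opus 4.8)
The plan is to read this off directly from Theorem~\ref{TheoAplResultPrinB&C}, which already does all the work. Applied with the divisor $d$ playing the role of $k$, that theorem asserts that under exactly the hypotheses now imposed ($q$ a prime power, $h\geq 2$, and $q^{d}\equiv 1\pmod g$ for some $d\mid h$ with $d\neq h$) there exists a $B_{h}[g]$ set in $\mathbb{Z}_{(q^{h}-1)/g}$ with $q$ elements, namely the image $\phi(\theta+\mathbb{F}_{q})$ under the discrete logarithm of the quotient $\mathbb{F}_{q^{h}}^{\ast}/H_{g}$, where $H_{g}=\langle\theta^{(q^{h}-1)/g}\rangle\leq\mathbb{F}_{q^{k}}^{\ast}$ and $\langle\theta\rangle=\mathbb{F}_{q^{h}}^{\ast}$. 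So no new construction is needed; the corollary is just the extremal reformulation of that existence statement.

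The only remaining step is to unwind the definition of the extremal function. Since
\[
f_{h}\!\left(\frac{q^{h}-1}{g},g\right)=\max\Bigl\{|A|:A\in B_{h}[g]\bigl(\mathbb{Z}_{(q^{h}-1)/g}\bigr)\Bigr\},
\]
and Theorem~\ref{TheoAplResultPrinB&C} exhibits one member of $B_{h}[g]\bigl(\mathbb{Z}_{(q^{h}-1)/g}\bigr)$ of cardinality $q$, the maximum is at least $q$, which is precisely the claimed inequality.

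There is no genuine obstacle in the corollary itself: all the substance — choosing $H_{g}$ so that $|Ker(\phi)|=g$, invoking Lemma~\ref{LemaPrincipal} to see that $\phi(\theta+\mathbb{F}_{q})$ is $B_{h}[g]$, and the cardinality count that uses $\mathrm{grad}_{\mathbb{F}_{q}}(\theta)=h>k$ together with Lemma~\ref{Lem|Bhg|} — sits inside the proof of Theorem~\ref{TheoAplResultPrinB&C}. If desired, one may add the remark that, taking $d=1$, Dirichlet's theorem on primes in arithmetic progressions furnishes infinitely many primes $q\equiv 1\pmod g$, so the bound holds for infinitely many moduli $\frac{q^{h}-1}{g}$; but this observation is not required for the statement as given.
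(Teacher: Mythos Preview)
Your proposal is correct and matches the paper's approach exactly: the corollary is stated immediately after Theorem~\ref{TheoAplResultPrinB&C} with no separate proof, since it is just the extremal reformulation of that theorem via the definition of $f_{h}(N,g)$. Your additional recap of the ingredients inside Theorem~\ref{TheoAplResultPrinB&C} and the Dirichlet remark are fine but, as you note, not needed for the bare statement.
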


\section{ Some numeric examples}

Using the classic constructions, we present some examples by using our main result highlighting the advantages
and disadvantages of using appropriate homomorphisms to built $B_{h}[g]$
sets.
\begin{example}
Let $g=2$,  for $q=7$ and $h=3$. Since $7\equiv1(\bmod\,2)$, we have that
\[
B\&C(7,3,\theta,2)=\{1,\mbox{ }68,\mbox{ }96,\mbox{ }108,\mbox{ }123,\mbox{ }128,\mbox{ }149\}\in B_{3}[2]\left(\mathbb{Z}_{171}\right),
\]
because of the main result applied to the set
\[
B\&C(7,3,\theta)=\{1,\mbox{ }108,\mbox{ }123,\mbox{ }128,\mbox{ }149,\mbox{ }239,\mbox{ }267\}\in B_{3}\left(\mathbb{Z}_{342}\right),
\]
 where $\theta=-2x^2+2x-1$ is a primitive element of $\mathbb{F}_{7^3}$. Check
that
$$108+108+108\equiv68+128+128(\bmod\,171)$$ and $$1+1+1\equiv68+128+149(\bmod\,171).
$$
Now, for $g=3$ as $7\equiv1(\bmod\,3)$, using the previous $B\&C(7,3,\theta)$,
\[
B\&C(7,3,\theta,3)=\{1,\mbox{ }9,\mbox{ }11,\mbox{ }14,\mbox{ }35,\mbox{ }39,\mbox{ }108\}\in B_{3}[3]\left(\mathbb{Z}_{114}\right),
\]
but there is not elements from $\mathbb{Z}_{114}$ with three different
representations as a $3-$sum of elements from $B\&C(7,3,\theta,3)$,
at most there are some cases like
$$39+108+108\equiv9+9+9(\bmod\,114)$$ and $$11+14+39\equiv35+35+108(\bmod\,114),
$$
that is why, in fact $B\&C(7,3,\theta,3)\in B_{3}[2]\left(\mathbb{Z}_{114}\right).$\\

Similarly, for $g=6$ as $7\equiv1(\bmod\,6)$ and $B\&C(7,3,\theta)\in B_{3}\left(\mathbb{Z}_{342}\right)$
we have
\[
B\&C(7,3,\theta,6)=\{1,\mbox{ }9,\mbox{ }11,\mbox{ }14,\mbox{ }35,\mbox{ }39,\mbox{ }51\}\in B_{3}[6]\left(\mathbb{Z}_{57}\right),
\]
also in this case, there is not elements of $\mathbb{Z}_{57}$ with
six different representations as $3-$sum of elements from $B\&C(7,3,\theta,6)$,
but for direct verification is easy to verify that it is a $B_{3}[3]\left(\mathbb{Z}_{57}\right)$
set,
\[
1+1+1\equiv11+14+35\equiv39+39+39(\bmod\,57),
\]
\[
9+9+9\equiv+39+51+51\equiv14+35+35(\bmod\,57).
\]

\end{example}
Sinca a $B_{h}[g]$ is also a $B_{h}[g^{\prime}]$ for any $g^{\prime}>g$,
the previous example using the Bose and Chowla construction shows
that our main result, in some cases, allow us to construct a $B_{h}[g^{\prime}]$
for $g^{\prime}<g$ in $\mathbb{Z}_{\frac{q^{h}-1}{g}}$.
\begin{example}
Now, using $g=3$, $q=7$ and $h=3$, the set
\[
B\&C(7,3,\alpha,3)=\{1,\mbox{ }7,\mbox{ }24,\mbox{ }36,\mbox{ }38,\mbox{ }49,\mbox{ }54\}\in B_{3}[3]\left(\mathbb{Z}_{114}\right)
\]
obtained by apply the main result to the set
$$
B\&C(7,3,\alpha)=\{1,\mbox{ }121,\mbox{ }152,\mbox{ }168,\mbox{ }252,\mbox{ }264,\mbox{ }277\}\in B_{3}\left(\mathbb{Z}_{342}\right),
$$
where $\alpha=x^2-3x+3$ is a primitive element of $\mathbb{F}_{7^3}$ different
from the one used in the previous example. Is easy to check that
\[
1+1+36\equiv54+49+49\equiv7+7+24(\bmod\,114).
\]

\end{example}
This previous example shows a case where the set is in fact a $B_{h}[g]$
with $g=|Ker(\varphi)|$, where $\varphi$ is the appropriate homomorphism
to the $B\&C(7,3,\theta)$ set.
\begin{example}
Let $g=3$, since $7^{k}\equiv1(\bmod\,3)$ for $k=2$, we use $q=7$ and
$h=4$, then
\[
B\&C(7,4,\beta,3)=\{1,\mbox{ }275,\mbox{ }429,\mbox{ }449,\mbox{ }621,\mbox{ }644,\mbox{ }756\}\in B_{4}[3]\left(\mathbb{Z}_{800}\right),
\]
applying our main result to the set $$B\&C(7,4,\beta)=\{1,\mbox{ }429,\mbox{ }621,\mbox{ }644,\mbox{ }1249,\mbox{ }1556,\mbox{ }1875\}\in B_{4}\left(\mathbb{Z}_{2400}\right)$$
where $\beta=-2x^2+x+2$ is a primitive element of $\mathbb{F}_{7^4}$. But
in this case, the set is in fact a $B_{4}[2]$.

Let $g=24$, since $7^{k}\equiv1(\bmod\,24)$ for $k=2$, use $q=7$ and
$h=4$, then
\[
B\&C(7,4,\beta,24)=\{1,\mbox{ }21,\mbox{ }29,\mbox{ }49,\mbox{ }56,\mbox{ }44,\mbox{ }75\}\in B_{4}[24]\left(\mathbb{Z}_{100}\right),
\]
using the previous set $B\&C(7,4,\beta)$. The expected is a
$B_{4}[g^{\prime}]$ set with \linebreak $g^{\prime}<24$, and, as direct
verification shows it is in fact a $B_{4}[7]$.
\begin{align*}
1+21+29+49&\equiv44+44+56+56\equiv1+1+49+49 \\
&\equiv21+21+29+29\equiv75+75+75+75 \\
&\equiv1+49+75+75\equiv21+29+75+75(\bmod\,100)
\end{align*}

\end{example}
On the next examples use another classic construction, this time a
\textbf{$B_{2}$ }type of Ruzsa set.
\begin{example}
Let $g=2$, then for $p=11$ there is
\[
Ruzsa(11,\theta,2)=\{3,\mbox{ }7,\mbox{ }8,\mbox{ }10,\mbox{ }31,\mbox{ }37,\mbox{ }39,\mbox{ }45,\mbox{ }46,\mbox{ }49\}\in B_{2}[2]\left(\mathbb{Z}_{55}\right),
\]
because of the main result applied to the set
\[
Ruzsa(11,\theta)=\{7,\mbox{ }39,\mbox{ }58,\mbox{ }63,\mbox{ }65,\mbox{ }86,\mbox{ }92,\mbox{ }100,\mbox{ }101,\mbox{ }104\}\in B_{2}(\mathbb{Z}_{110})
\]
where $\theta=2$ is a primitive element of $\mathbb{F}_{11}.$ In this
case $40\in\mathbb{Z}_{55}$ has two \linebreak different representations as
the sum of two elements from the set $Ruzsa(11,\theta,2)$, $$3+37\equiv46+49(\bmod\,55).$$

Let $g=5$, then for $p=11$ and $\theta=2$
\[
Ruzsa(11,\theta,5)=\{4,\mbox{ }7,\mbox{ }12,\mbox{ }13,\mbox{ }14,\mbox{ }16,\mbox{ }17,\mbox{ }19,\mbox{ }20,\mbox{ }21\}\in B_{2}[5]\left(\mathbb{Z}_{22}\right),
\]
Here we could check directly that
\[
13+20\equiv4+7\equiv19+14\equiv16+17\equiv21+12(\bmod\,22).
\]

\end{example}
Now we are going to use a $B_{5}$ type of Gómez and Trujillo set
to obtained $B_{5}[g]$ sets for at least two different values of
$g$.
\begin{example}
Let $g=2$, then for $p=5$ and $\theta=x^3+x-1$ primitive element of $\mathbb{F}_{5^4}$, there is
\[
G\&T(5,5,\theta,2)=\{127,\mbox{ }226,\mbox{ }258,\mbox{ }625,\mbox{ }1384\}\in B_{5}[2]\left(\mathbb{Z}_{1560}\right),
\]
because of the main result applied to the set
\[
G\&T(5,5,\theta)=\{226,\mbox{ }625,\mbox{ }1384,\mbox{ }1687,\mbox{ }1818\}\in B_{5}(\mathbb{Z}_{3120})
\]

Let $g=8$, then for $p=5$ applying the main result to the previous
$B_{5}$, obtained
\[
G\&T(5,5,\theta,8)=\{127,\mbox{ }214,\mbox{ }226,\mbox{ }235,\mbox{ }258\}\in B_{5}[8]\left(\mathbb{Z}_{390}\right),
\]
that is in fact a $B_{5}[2]$ by direct verification.
\end{example}
The main advantage over Martin and O'Bryant way to constructed the
Generalized Sidon sets is that we could construct $B_{h}[g]$ sets
for any integer $h\geq2$ and any integer $g\geq2$, just using the
correct construction and the appropriate prime $p$ or prime power
$q$ as we showed on the previous examples.

\section{Open problems}

\begin{enumerate}
\item On this paper we used $B_{h}$ sets coming from some classic constructions
to built $B_{h}[g]$ sets, it is clear we do not only present a result
that allows to do this but also we study the appropriate homomorphisms
for those constructions. Another classic construction that we could
take into consideration is the one created for Singer of a $B_{2}$
set mainly its generalization to built $B_{h}$ sets \cite{B&C}.
\\
Is still an open problem to study the suitable homomorphism to use
with the $B_{h}$ set type Singer. This problem required the study
of the factorization of the $h^{th}$ cyclotomic polynomial $\frac{q^{h+1}-1}{q-1}$
with $q$ a prime power.
\item The examples on this paper showed some cases where the homomorphism
$\varphi:G\rightarrow G^{\prime}$ send a $B_{h}$ set into a $B_{h}[g^{\prime}]$
with $g^{\prime}<|ker(\varphi)|$, this generally happens for large
$|Ker(\varphi)|$. The problem is to identify the structure of the
$B_{h}$ set in order to obtain a $B_{h}[g]$ set using our main result
with the maximum and minimum possible value of $g$ that we can get.
\item The study of functions not homomorphisms that preserve the property
of been a $B_{h}$ set, the interest on this problem is to identify
the functions that transform a $B_{h}$ set into a $B_{h}[g]$ set,
at least when the ambient group is $\mathbb{Z}$.
\item For the constructions of $B_{h}$ sets that we did not consider at
this paper, to study the appropriate homomorphism that preserve the
cardinality of the set.\end{enumerate}

\textbf{Acknowledgement.} We thank the Universidad del Valle for the financial support provided to complete the postgraduate studies from the authors Y. Caicedo and J. Gómez. Also, the authors thank to  ``Patrimonio Autónomo Fondo Nacional de Financiamiento para la Ciencia, la Tecnología y la Innovación - Francisco José de Caladas'', Universidad del Cauca and COLCIENCIAS for financial support under research project 110356935047 titled ``Construcciones de Conjuntos $B_h[g]$, propiedades de Midy, y algunas aplicaciones''.

\end{document}